\date{}
\newcommand{\R}{\mathbb{R}}
\newcommand{\Z}{\mathbb{Z}}
\newtheorem{prop}{Proposition}[subsection]
\newtheorem{cor}{Corollary}[subsection]
\newtheorem{lemma}{Lemma}[subsection]
\newtheorem{theorem}{Theorem}[subsection]
\newtheorem*{theorem*}{Theorem}
\theoremstyle{remark}
\newtheorem{remark}{Remark}[subsection]
\theoremstyle{definition}
\newtheorem{defin}{Definition}[subsection]
\begin{document}

\title{The Wriggle Polynomial for Virtual Tangles}

\author{Nicolas Petit}

\maketitle
\begin{center}Oxford College of Emory University (petitnicola@gmail.com)\end{center}

\begin{abstract}
\noindent 
We generalize the Wriggle polynomial, first introduced by L. Folwaczny and L. Kauffman, to the case of virtual tangles. This generalization naturally arises when considering the self-crossings of the tangle. We prove that the generalization (and, by corollary, the original polynomial) are Vassiliev invariants of order one for virtual knots, and study some simple properties related to the connected sum of tangles.
\end{abstract}

\section{Introduction}

The aim of this note is to present a generalization of the Wriggle polynomial for virtual knots, introduced in \cite{linkingnumberaffineindex}, to the case of oriented virtual tangles (of the most general type). The interest in this topic came from the author's previous paper \cite{indexpolyvirtualtangles}, in search of an index-type polynomial invariant for virtual tangles that would behave nicely under connected sum. While we don't think the invariant we found fulfills our wish, we believe it to be a stepping stone in the right direction, because of its connections with the Affine Index polynomial.

This paper is organized as follows: section \ref{virtualknotsandtangles} provides a brief review of the definitions of virtual tangles, linking number, and Vassiliev invariants, that will be needed in the rest of the paper. In section \ref{tangleinvariants} we review the construction of \cite{linkingnumberaffineindex}, then define the polynomial invariants for tangles. We prove our main proposition in section \ref{proof}, and conclude by studying some of the properties of the invariant in section \ref{strengthofinvariants}.

\section{Background}
\label{virtualknotsandtangles}
\subsection{Virtual knots and tangles}

Virtual knots were first introduced by Kauffman in \cite{virtualknottheory}. They can be defined in multiple ways: we can think of a virtual knot as a knot projection (that is, an immersion of $S^1$ into the plane) where every double point is equipped with one of three types of crossing (classical positive, classical negative and virtual), up to Reidemeister moves and virtual Reidemeister moves. We can also think of virtual knots as equivalence classes of Gauss diagrams up to Reidemeister moves, or as equivalence classes of knots in thickened surfaces up to stabilization/destabilization. We will mostly focus on the diagrammatic approach in this paper.

\begin{figure}[!h]
\centering
\includegraphics[scale=0.1]{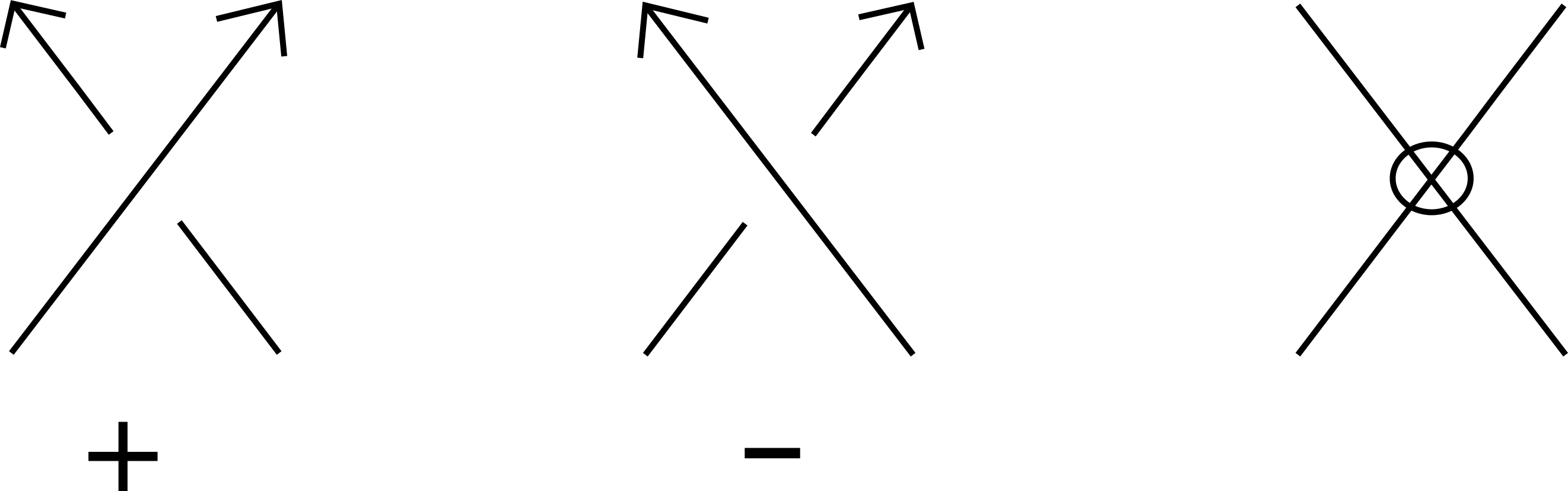}
\caption{The three types of crossing: positive, negative and virtual.}
\end{figure}

\begin{figure}[!h]
\centering
\includegraphics[scale=.07]{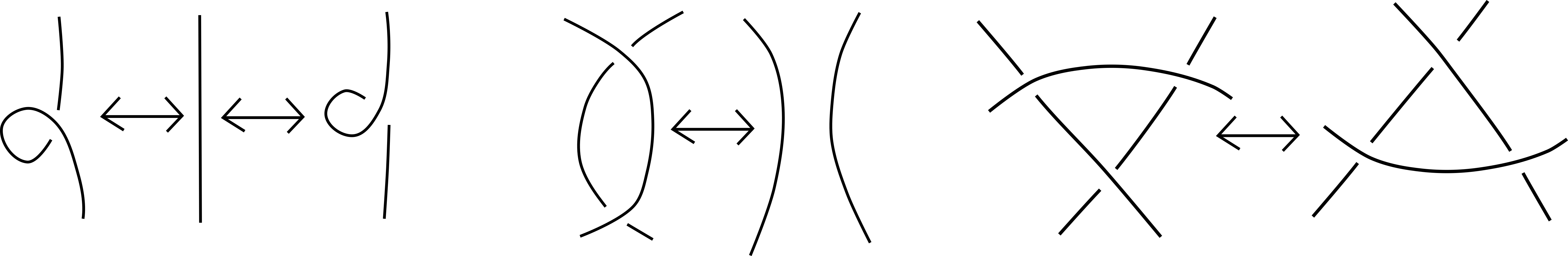}
\caption{The classical Reidemeister moves.}
\label{RM}
\end{figure}

\begin{figure}[!h]
\centering
\includegraphics[scale=.06]{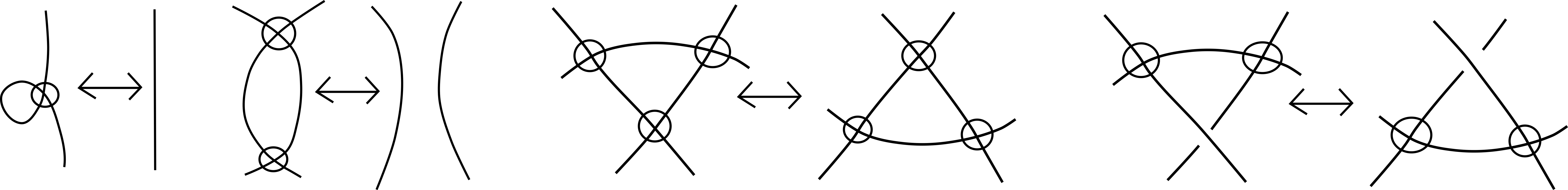}
\caption{The virtual Reidemeister moves.}
\label{virtualRM}
\end{figure}

A long virtual knot differs from a virtual knot in that we're considering immersions of $\R$ into the plane that are constant outside of a ball of finite radius (where all the ``knottiness'' is contained). We assign to every double point one of the three types of crossing, and consider this long virtual knot diagram up to Reidemeister and virtual Reidemeister moves.
These virtual knots also possess the other two interpretations: they are Gauss diagrams with a distinguished point, representing the ``point at infinity'', modulo Reidemeister moves; and can be interpreted as knots in thickened surfaces with boundary, up to stabilization/destabilization. For the way these approaches to long knots are connected, see \cite{chrismanmonoidlongvirtual}, \cite{longframedfti}.

A virtual tangle is a collection of virtual knots and long virtual knots, usually linked with each other in some fashion. 
We will typically represent tangles in a square, where the ends of the long component are fixed distinguished points on the top or bottom sides of the square. 
If the tangle has $m$ endpoints at the top and $n$ endpoints at the bottom, we say that the tangle is an $(m,n)$ tangle. 
We usually draw the top and bottom of the square, but not the sides, see Fig. \ref{virtualtangleexample}.

We can define the connected sum of two virtual tangles as the ``stacking'' of one tangle above another. 
Of course, this operation is only defined if the two tangles have the same number of distinguished points alongside the common boundary, and their orientations match; because of this, the operation is, generally speaking, not commutative. 
We will denote the connected sum of tangle $T$ and $U$ by $T\#U$, in which we stack $T$ above $U$.

\begin{remark}
When talking about a ``tangle'' we will typically work in the most general setting: we allow our tangle to contain closed components, and we don't assume that it has the same number of distinguished points in both boundaries, or that long components necessarily connect top to bottom.
\end{remark}

\begin{figure}[!h]
\centering
\includegraphics[scale=.13]{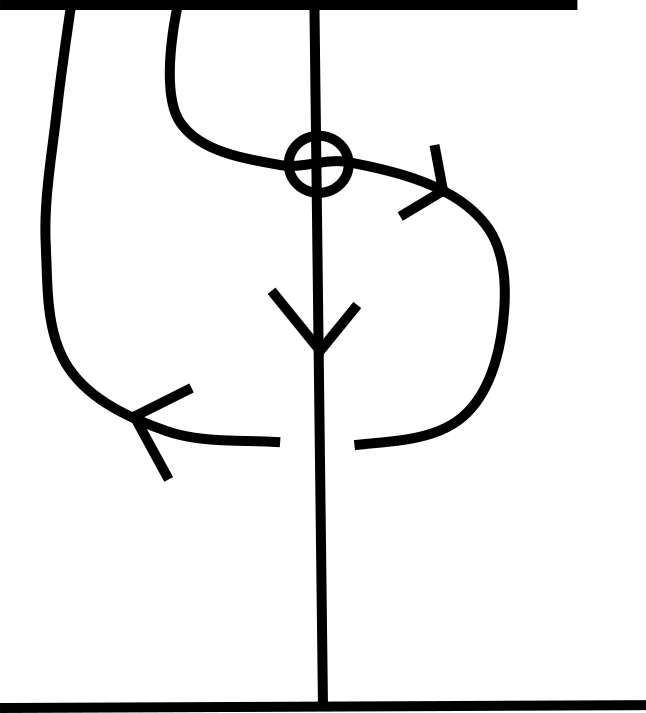}
\caption{An example of a $(3,1)$ virtual tangle.}
\label{virtualtangleexample}
\end{figure}

Finally, let's briefly review the notion of linking number of a virtual link.
For oriented classical links, we have three equivalent definitions of the linking number between two components: we can either take the writhe of all the crossings where component one goes over component two, or take the writhe of all crossings where component two goes over component one, or take the overall writhe of the two components and divide by two.
\begin{equation}\label{linkingnumbereq}lk(T_1, T_2)=\frac{1}{2}\sum_{d\in T_1\cap T_2}sgn(d)=\sum_{d\in T_1\text{ over } T_2}sgn(d)=\sum_{d\in T_2\text{ over } T_1}sgn(d)\end{equation}
As a result of the definition, we clearly have that $lk(T_1, T_2)=lk(T_2, T_1)$, so the linking number only depends on the pair of components.

However, that fact is not true anymore for virtual links, as the number of times $T_1$ goes over $T_2$ need not be the same as the number of times $T_2$ goes over $T_1$. To see this, just take any classical link and change one crossing from classical to virtual; this operation will change one of the last two expressions of equation \ref{linkingnumbereq} but not the other.
To be able to talk about a virtual linking number we must then pick one of the definitions. Throughout this paper, we will follow the literature and use the ``over'' definition, for which $$vlk(T_1, T_2)=\sum_{d\in T_1\text{ over } T_2}sgn(d).$$

Note that the above observations imply that $vlk(T_1, T_2)\neq vlk(T_2, T_1)$, so we need to consider the \emph{ordered} pair of components $(T_1, T_2)$ when computing a virtual linking number. The difference $vlk(T_1, T_2)-vlk(T_2, T_1)$ is called the wriggle number $W(T_1, T_2)$ of the pair of components \cite{linkingnumberaffineindex}; clearly the wriggle number is an invariant of the ordered pair, and $W(T_2, T_1)=-W(T_1, T_2)$, so $|W(T_1, T_2)|$ is an invariant of the (unordered) pair of components.


\subsection{Vassiliev invariants of virtual knots}
\label{vassilievinvariants}
Vassiliev invariants for virtual knots were first introduced by Kauffman in \cite{virtualknottheory} as a natural generalization of the notion of a finite-type invariant for classical knots.
Formally, a Vassiliev invariant of virtual knots is the extension of a virtual knot invariant to the category of singular virtual knots.
These are virtual knots with an extra type of crossing (transverse double points), modulo the Reidemeister moves for virtual knots and some extra moves, called rigid vertex isotopy.

We extend the virtual knot invariant $\nu$ to singular virtual knots by resolving every double point as a weighted average of its two possible resolution, a positive and a negative crossing, see Fig. \ref{doublepointresolution}.
We then say that $\nu$ is a finite-type invariant, or Vassiliev invariant, of order $\leq n$ if it vanishes on every knot with more than $n$ double points. 
Examples of finite-type invariants according to this definition are the coefficient of $x^n$ in the Conway polynomial or the Birman coefficients. 
\begin{figure}[!h]
\centering
\includegraphics[scale=.07]{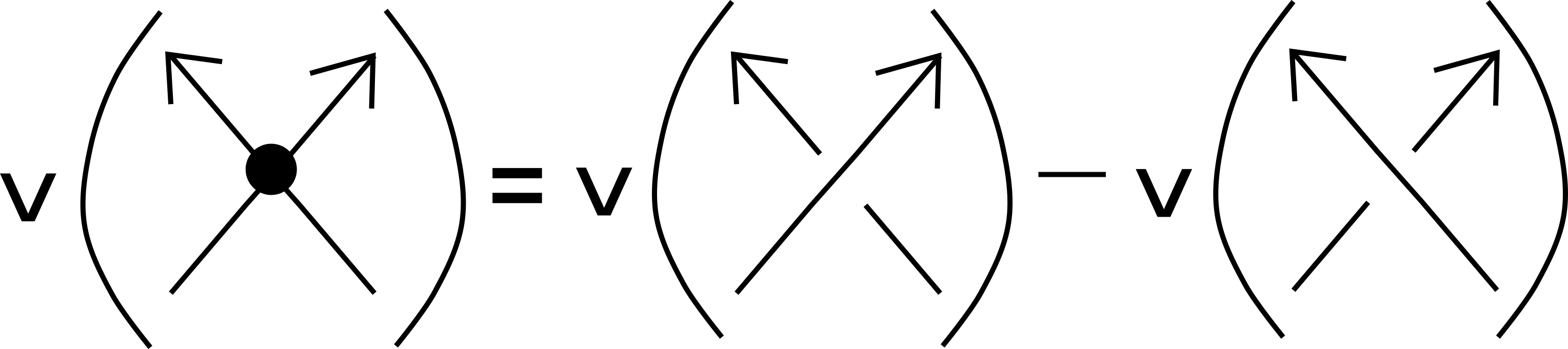}
\caption{How to resolve a double point.}
\label{doublepointresolution}
\end{figure}

We can naturally extend the above ideas to the case of virtual tangles. A finite-type or Vassiliev invariant of virtual tangles will be an extension of a virtual tangle invariant to virtual tangles with double points, where we resolve a double point with the same relation pictured in Fig. \ref{doublepointresolution}. 
Then $\nu$ is a Vassiliev invariant of order $\leq n$ for virtual tangles if it vanishes on any tangle with more than $n$ double points; we say $\nu$ is of order $n$ if it is of order $\leq n$ but not of order $\leq n-1$.

\begin{remark}
Around the same time, Goussarov, Polyak and Viro \cite{GPV} were independently developing a different generalization to virtual knots of finite-type invariants, in terms of a new type of crossing called semi-virtual. We will not work with these finite-type invariants in this paper.
\end{remark}

\begin{remark}
\label{smoothinggivestwocomponents}
We typically smooth crossings according to orientation and give the resulting object the inherited orientation, see Fig. \ref{smoothingofacrossing}.
If both stands belong to the same component, as a result of the smoothing we will get a two-component object.
\end{remark}

\begin{figure}[!h]
\centering
\includegraphics[scale=.1]{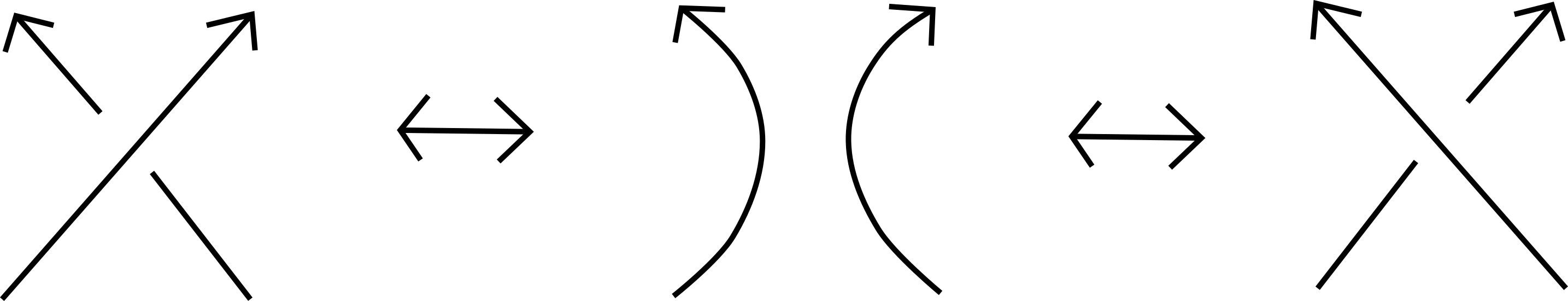}
\caption{How to smooth a crossing according to orientation. Note that positive and negative crossings give the same oriented smoothing.}
\label{smoothingofacrossing}
\end{figure}


\section{The self-crossing wriggle polynomial for virtual tangles}
\label{wrigglepolynomial}
\subsection{Definition of the invariant}
\label{tangleinvariants}
We will briefly recall the definition of the Wriggle polynomial from \cite{linkingnumberaffineindex}, to then extend this invariant to the virtual tangle case.

\begin{defin}
Let $K$ be a virtual knot. At every classical crossing $c$ of $K$, denote the incoming understrand with a black dot, see Figure \ref{blackdot}.

\begin{figure}[!h]
\centering
\includegraphics[scale=.12]{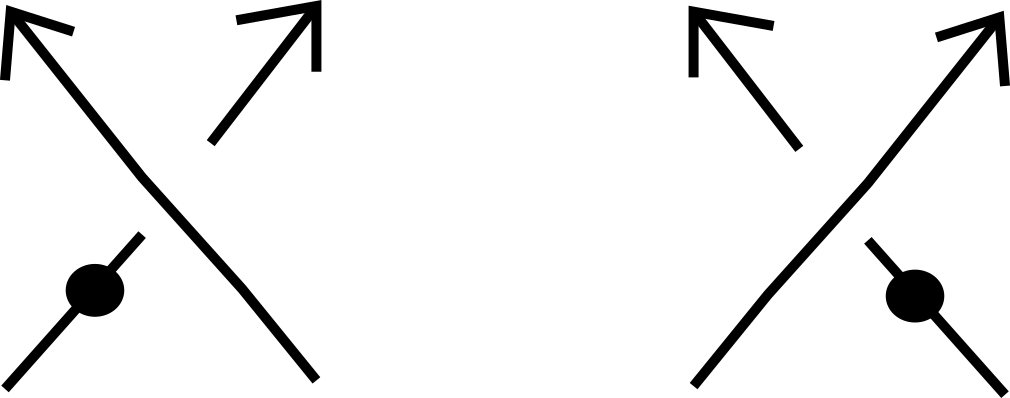}
\caption{How to assign a black dot to every classical crossing of $K$.}
\label{blackdot}
\end{figure}

Smooth each crossing $c$ according to orientation, and compute the wriggle number of the resulting two-component link $L_c$ (see Remark \ref{smoothinggivestwocomponents}), where the component that inherits the dot is labeled to be component one. We define the Wriggle polynomial as
$$W_K(t)=\sum_{c}sgn(c)(t^{W(L_c)}-1)=\sum_{c}t^{W(L_c)} - writhe(K).$$
\end{defin}

Figure \ref{wrigglepolyexample} shows an example of computing the wriggle polynomial for the virtualized trefoil. Smoothing the left crossing yields a link where component $1$ has a positive crossing going under component $2$, so $W(L_c)=0-(1)=-1$; smoothing the right crossings gives a link where component $1$ has a positive crossing going over component $2$, so $W(L_c)=1-0=1$. Both crossings are positive, so the Wriggle polynomial of the knot is $$W_K(t)=+t^1+t^{-1}-2=t+t^{-1}-2$$

\begin{figure}[!h]
\centering
\includegraphics[scale=.12]{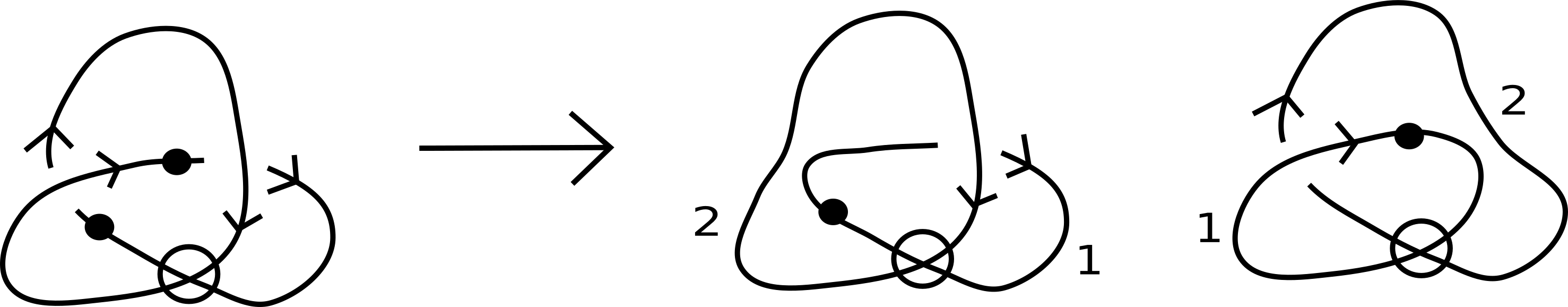}
\caption{Part of a sample computation of the wriggle polynomial.}
\label{wrigglepolyexample}
\end{figure}

\begin{prop}[\cite{linkingnumberaffineindex}] The Wriggle polynomial is a virtual knot invariant. It coincides with the affine index polynomial of \cite{affineindexpolynomial}. \end{prop}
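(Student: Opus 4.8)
The plan is to establish the two assertions in the reverse of the order stated: first I would prove that $W_K(t)$ coincides with the affine index polynomial, call it $P_K(t)$, of \cite{affineindexpolynomial}, and then deduce the invariance for free, since $P_K(t)$ is already known to be a virtual knot invariant. This route is attractive because both polynomials have exactly the same global shape, $\sum_c sgn(c)(t^{(\cdot)}-1)$ summed over the classical crossings of a diagram; so to identify them it suffices to match the exponents crossing by crossing, i.e. to show that for every classical crossing $c$ the wriggle number $W(L_c)$ of the oriented smoothing equals the affine index weight $w(c)$ assigned to $c$.

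To set up that comparison I would first recall the construction of $P_K(t)$: one labels the arcs of the oriented diagram by integers according to the arc-labeling rule used in \cite{affineindexpolynomial}, so that the labels are locally constant except for the prescribed jumps at each classical crossing, and then reads off from the four local labels an integer weight $w(c)$, with $P_K(t)=\sum_c sgn(c)(t^{w(c)}-1)$. The black-dot convention of the Wriggle polynomial --- marking the incoming understrand, and declaring the component that inherits the dot to be component one --- is exactly what is needed to break the inherent left/right symmetry in the same way the affine labeling does, so the two bookkeeping schemes must be made to correspond.

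The cleanest way to carry out the actual matching is in the Gauss-diagram (chord-diagram) model. A classical crossing $c$ is a signed, oriented chord whose two endpoints cut the underlying circle into two arcs; performing the oriented smoothing at $c$ is precisely passing to the two-component object whose components are these two arcs, the dotted one being component one. Every other classical crossing $d$ then either has both endpoints on one arc --- a self-crossing of a single component, invisible to the wriggle number --- or straddles the two arcs, contributing $\pm sgn(d)$ to $W(L_c)=vlk(L_1,L_2)-vlk(L_2,L_1)$ according to whether the over-endpoint (arrowtail) of $d$ lies on component one or component two. I would show that this signed count over the straddling chords is literally the affine index $w(c)$ of the chord $c$, so that $W(L_c)=w(c)$ and hence $W_K(t)=P_K(t)$.

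I expect the main obstacle to be this last crossing-by-crossing identity, and specifically pinning down the sign and normalization conventions so that the two definitions agree on the nose rather than up to a global shift: one must verify that the dot placement, the choice of the ``over'' convention for $vlk$, and the labeling jumps all line up, checking the positive and negative crossing cases and both straddling configurations separately. Once $W_K(t)=P_K(t)$ is in hand, invariance under the classical and virtual Reidemeister moves is immediate from the corresponding property of the affine index polynomial; alternatively, and as a consistency check, one can verify invariance directly --- the virtual moves and the detour move neither create classical crossings nor alter any $vlk$, an $R1$ kink smooths to a split two-component link with $W(L_c)=0$ so its term $t^0-1$ vanishes, an $R2$ pair contributes two cancelling terms, and an $R3$ move permutes the crossings while preserving each $W(L_c)$.
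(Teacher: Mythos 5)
This proposition is imported from \cite{linkingnumberaffineindex} and stated here without proof, so there is no in-paper argument to compare against; the only internal hint is the remark that follows it, which sketches exactly your Gauss-diagram reformulation, namely rewriting the wriggle number $W(L_c)$ as the signed count of chords intersecting the chord of $c$. Your plan --- match $W(L_c)$ to the affine index weight $w(c)$ crossing by crossing (with straddling chords contributing $\pm sgn(d)$ according to which component carries the over-strand, the convention check you flag being the routine core) and then inherit Reidemeister invariance from the affine index polynomial --- is essentially the argument of the cited source, so this is the same approach and it is sound.
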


\begin{remark}
Before talking about the general virtual tangles case, we should observe that the wriggle polynomial of a long virtual knot is the same as the wriggle polynomial of its closure. This is easy to see if we look at the Gauss diagram version of the wriggle polynomial, which is explained in detail in \cite{linkingnumberaffineindex} while showing that the wriggle polynomial coincides with the affine index polynomial. We can rewrite the wriggle number as the difference of the number of chords that intersect $c$ positively and the ones that intersect $c$ negatively; since the presence of the point at infinity does not affect either of these numbers, the wriggle polynomial of the long knot will be the same as the one of its closure. As a consequence, if two long knots have the same closure they can't be distinguished by the wriggle polynomial.
\end{remark}

\begin{defin} Let $T$ be an $n$-component oriented virtual tangle, where the components have been ordered $T_1, \ldots, T_n$, and assign to each component a variable $t_i$. Let $c$ be a self-crossing of the component $T_i$; mark the incoming understand of this self-crossing with a black dot. Smooth the self-crossing $c$ according to orientation, and compute the wriggle number $W(L_c)$ of the two-component link obtained by smoothing $c$, \emph{ignoring every other component of T and any crossings they might have with $L_c$}.

The self-crossing wriggle polynomial is then defined as
$$W_{sc, T}(t_1,\ldots, t_n)=\sum_{i}\sum_{c\in T_i}sgn(c) (t_i^{W(L_c)}-1),$$ where the second sum is over all self-crossings of component $T_i$.
\end{defin}

\begin{theorem}\label{wrigglepolyvirtualtanglesinvariant}
The wriggle polynomial for virtual tangles is an order one Vassiliev invariant of virtual tangles.
\end{theorem}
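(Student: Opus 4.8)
The plan is to establish two things: that $W_{sc,T}$ is invariant under the (virtual) Reidemeister moves, and that, once extended to singular tangles, it vanishes on every tangle with at least two double points but not on every tangle with one. For invariance, the cleanest route is to notice that, because the wriggle number $W(L_c)$ of a smoothed self-crossing is computed while \emph{ignoring every other component of $T$}, the polynomial splits as a sum over components,
$$W_{sc,T}(t_1,\ldots,t_n)=\sum_{i} W_{T_i}(t_i),$$
where $W_{T_i}$ is the ordinary wriggle polynomial of the component $T_i$ regarded as a stand-alone virtual (long) knot. Any Reidemeister or virtual move performed on $T$ either takes place entirely within a single component $T_i$, where it induces the corresponding move on the stand-alone $T_i$, or it involves strands of distinct components, in which case it is invisible to each $W_{T_j}$. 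Invariance of $W_{sc,T}$ thus reduces to invariance of the ordinary wriggle polynomial for virtual knots and long virtual knots, which is the cited Proposition together with the Remark relating a long knot to its closure.

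The heart of the matter, and the step I expect to carry the whole argument, is the following lemma: switching a classical crossing $d$ between its positive and negative versions does not change the wriggle number $W(L_c)$ of the smoothing of any self-crossing $c\neq d$. Indeed, if $d$ is not a self-crossing of the component carrying $c$ it is discarded in the formation of $L_c$ and there is nothing to prove; and if $d$ is a self-crossing lying between the two components of $L_c$, then switching $d$ simultaneously interchanges over/under and reverses $sgn(d)$. Since $W(L_c)=vlk(1,2)-vlk(2,1)$ and the two terms enter with opposite signs, moving the contribution of $d$ from one term to the other while flipping its sign leaves the total unchanged; a crossing internal to a single component of $L_c$ contributes to neither term. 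The main obstacle is keeping the bookkeeping honest here — in particular verifying that the net contribution of an inter-component crossing to $W(L_c)$ depends only on the orientations and on which strand belongs to which component, not on the over/under data.

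With the lemma in hand the Vassiliev estimate is a direct computation. For a single double point $d$ that is a self-crossing of $T_i$, the oriented smoothing $L_d$ is the same whether $d$ is resolved positively or negatively; what changes is that the incoming understrand — hence the black dot, hence the label ``component $1$'' — passes to the other arc, so the two resolutions give smoothings with opposite wriggle numbers $W_+$ and $-W_+$, while the lemma guarantees all terms with $c\neq d$ cancel. One finds
$$W_{sc}(D_+)-W_{sc}(D_-)=t_i^{W_+}+t_i^{-W_+}-2,$$
and, if $d$ instead joins two different components, the same lemma gives first difference $0$. For a tangle with two double points $d_1,d_2$, resolve $d_2$ first; by the lemma the first difference at $d_1$ — whichever of the two forms above it takes — does not depend on how $d_2$ was resolved, so the outer alternating sum $\sum_{\epsilon_2\in\{+,-\}}\epsilon_2(\cdots)$ of an $\epsilon_2$-independent quantity vanishes. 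Hence $W_{sc,T}$ is of order $\leq 1$.

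Finally, to see the order is exactly one rather than zero, it suffices to exhibit one double point on which the first difference is nonzero. Taking the virtualized trefoil of Figure \ref{wrigglepolyexample} and promoting the self-crossing with $W(L_c)=1$ to a double point yields first difference $t+t^{-1}-2\neq 0$, so $W_{sc,T}$ is not of order $\leq 0$. This completes the plan.
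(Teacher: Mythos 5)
Your proposal is correct, but it is organized quite differently from the paper's proof, and the comparison is instructive. For invariance, the paper checks each classical, virtual, and mixed Reidemeister move directly on the tangle diagram (with a careful case analysis of R3 according to how many strands share a component); you instead observe the splitting $W_{sc,T}(t_1,\ldots,t_n)=\sum_i W_{T_i}(t_i)$ into stand-alone wriggle polynomials of the components and quote the known invariance of the ordinary wriggle polynomial -- a fact the paper itself only states informally in its final section (``it is really just the sum of the wriggle polynomials of each separate component''). This reduction is valid, with the small caveat that for long components it leans on the Remark about long knots and their closures (the cited Proposition covers only closed virtual knots), and that moves involving several components restrict on each component to a planar isotopy that leaves the Gauss data untouched -- a point you gesture at but could state explicitly. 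For the Vassiliev estimate, your crossing-switch lemma is precisely the cancellation mechanism the paper uses inside its four-term expansion of $W_{sc,T}(d,d')$: switching a crossing flips both its sign and its over/under status, so its contribution to $vlk(1,2)-vlk(2,1)$ is unchanged. The paper deploys this by pairing the four resolutions two at a time; you isolate it as a lemma, compute the first difference in closed form, $W_{sc}(D_+)-W_{sc}(D_-)=t_i^{W_+}+t_i^{-W_+}-2$ (or $0$ for a mixed crossing), and kill the second difference by noting the first difference is independent of how the other double point is resolved. This buys both modularity and a sharper statement: the explicit first-difference formula immediately certifies nonvanishing on one double point, whereas the paper separately exhibits the long knot of Fig. \ref{pscnonzero} with value $t_1^2-1$; your example (the virtualized trefoil of Fig. \ref{wrigglepolyexample} with one crossing doubled, giving $t+t^{-1}-2$) is a closed component rather than a long one, which is permitted since the paper's tangles may contain closed components. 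Your bookkeeping in the lemma is exhaustive -- $d$ discarded, $d$ joining the two components of $L_c$, or $d$ internal to one of them -- and matches the paper's three bullet cases, so there is no gap.
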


\subsection{Proof of Theorem \ref{wrigglepolyvirtualtanglesinvariant}}
\label{proof}
To successfully prove the theorem, we need to show three things: the polynomial is invariant under Reidemeister moves, its extension vanishes on any virtual tangle with two double points, and there is a virtual tangle with one double point on which the extension is nonzero. Let us first prove that the polynomial is, in fact, a virtual tangle invariant, by checking its invariance under Reidemeister moves.

\begin{proof}
We should note that, since the invariant only cares about self-crossings, we only need to consider the strand configurations in Reidemeister moves that form at least one self-crossing. In Fig. \ref{reidemeisterone} we can easily see that, no matter which component inherits the dot, the wriggle number of the two resulting components is zero, so the contribution of the kink is $\pm(t^0-1)=0$, which is the same as the contribution of the strand without the kink; this shows invariance under the first Reidemeister move. 

\begin{figure}[!h]
\centering
\includegraphics[scale=.1]{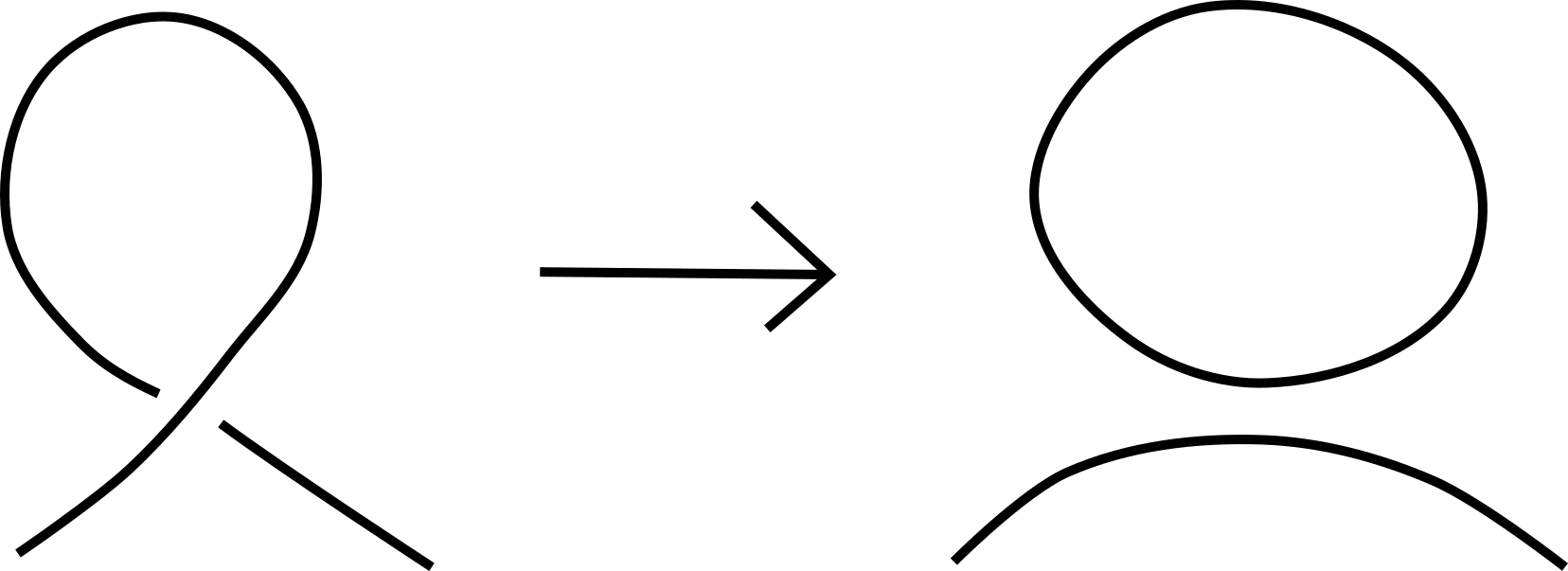}
\caption{The first Reidemeister move. Regardless of which component inherits the dot, the two components we get from the smoothing don't intersect, so $W(L_c)=0$.}
\label{reidemeisterone}
\end{figure}

The cases of the second Reidemeister move are covered in Fig. \ref{reidemeistertwo}: the links that result after the smoothing are isotopic (up to Reidemeister move one), and the dot ends up in the same component, so their wriggle number is identical. Since one crossing is positive and the other is negative, the two contributions cancel out, which shows that the invariant doesn't change under Reidemeister move two.

\begin{figure}[!h]
\begin{subfigure}{.5\textwidth}
  \centering
  \includegraphics[width=.8\linewidth]{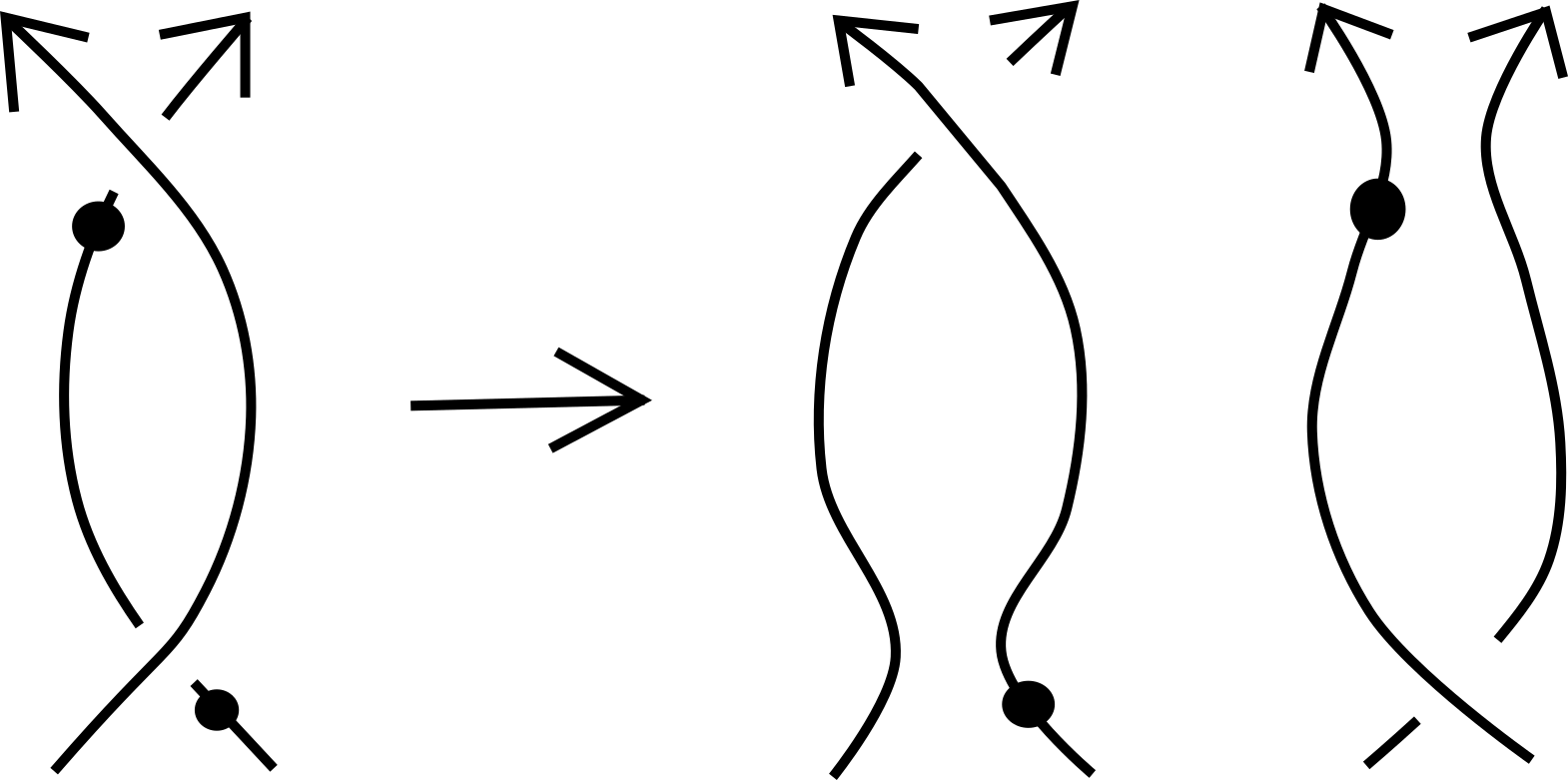}
  \caption{The braided version of R2}
\end{subfigure}%
\begin{subfigure}{.5\textwidth}
  \centering
  \includegraphics[width=.8\linewidth]{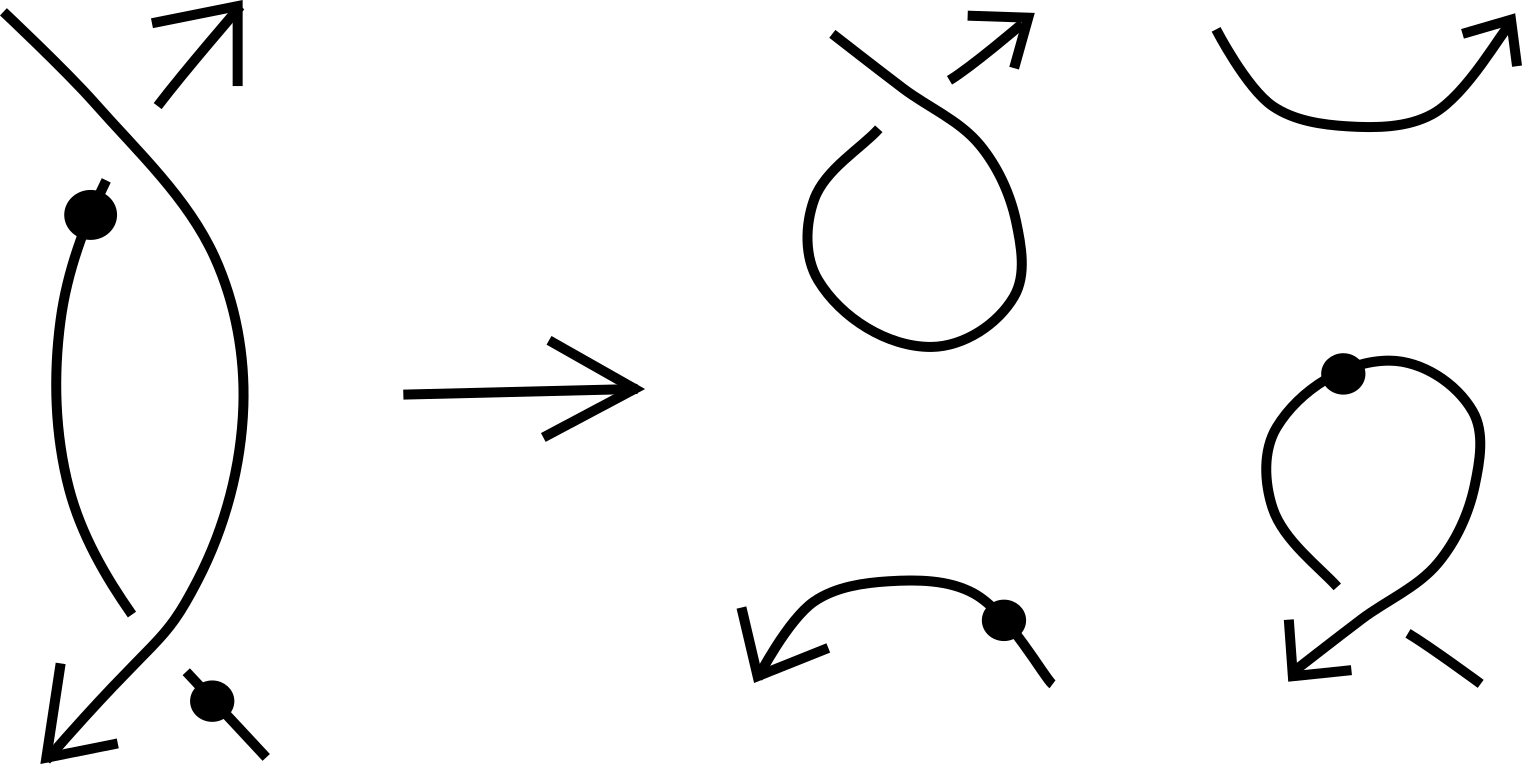}
  \caption{The cyclic version of R2.}
\end{subfigure}
\caption{The two cases of Reidemeister move two.}
\label{reidemeistertwo}
\end{figure}

Let's look at Reidemeister move three: there is a correspondence between the crossings before and after the move, as pictured by Fig. \ref{reidemeisterthree}. If all three strands belong to the same component of the tangle we'll need to consider the contributions of all crossings, otherwise the two strands that belong to the same component will only intersect in one of the crossings. For two of these crossings the smoothing yields homotopic links with the same choice of component one, as pictured in Fig. \ref{reidemeisterthreea} and \ref{reidemeisterthreeb}; these crossings will then contribute the same amount before and after the move. 

\begin{figure}[!h]
  \centering
  \includegraphics[scale=.12]{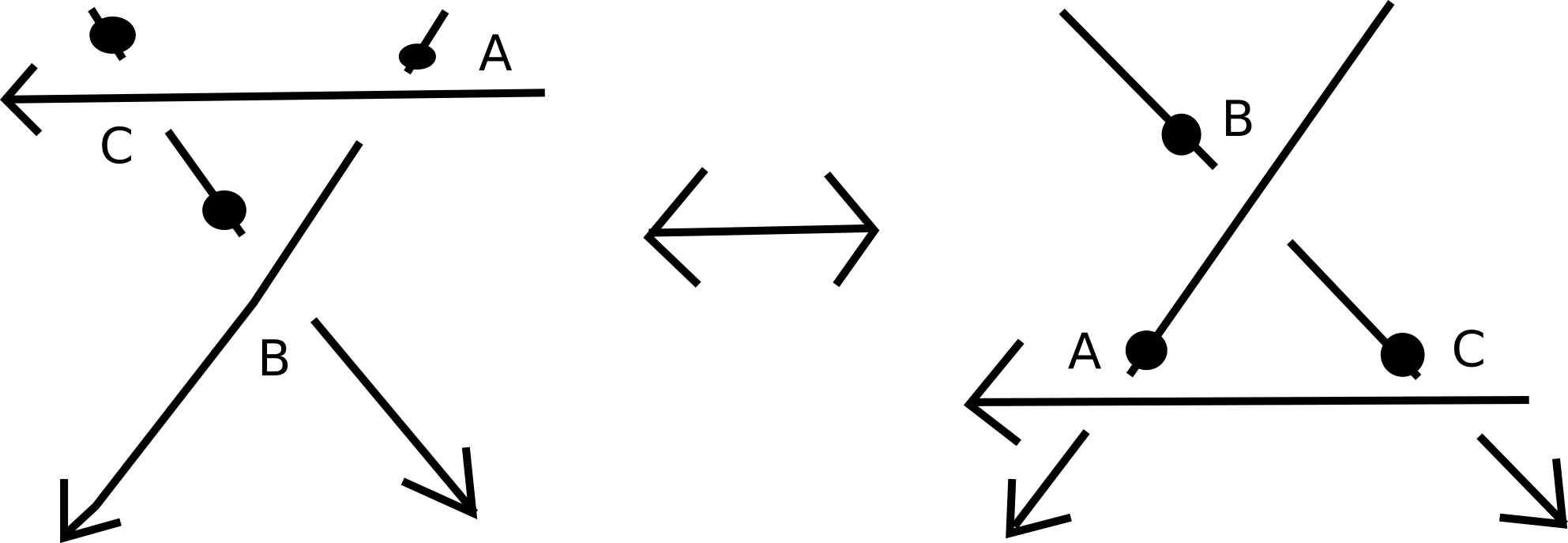}
  \caption{The third Reidemeister move.}
  \label{reidemeisterthree}
\end{figure}

\begin{figure}[!h]
\begin{subfigure}{.5\textwidth}
  \centering
  \includegraphics[width=.8\linewidth]{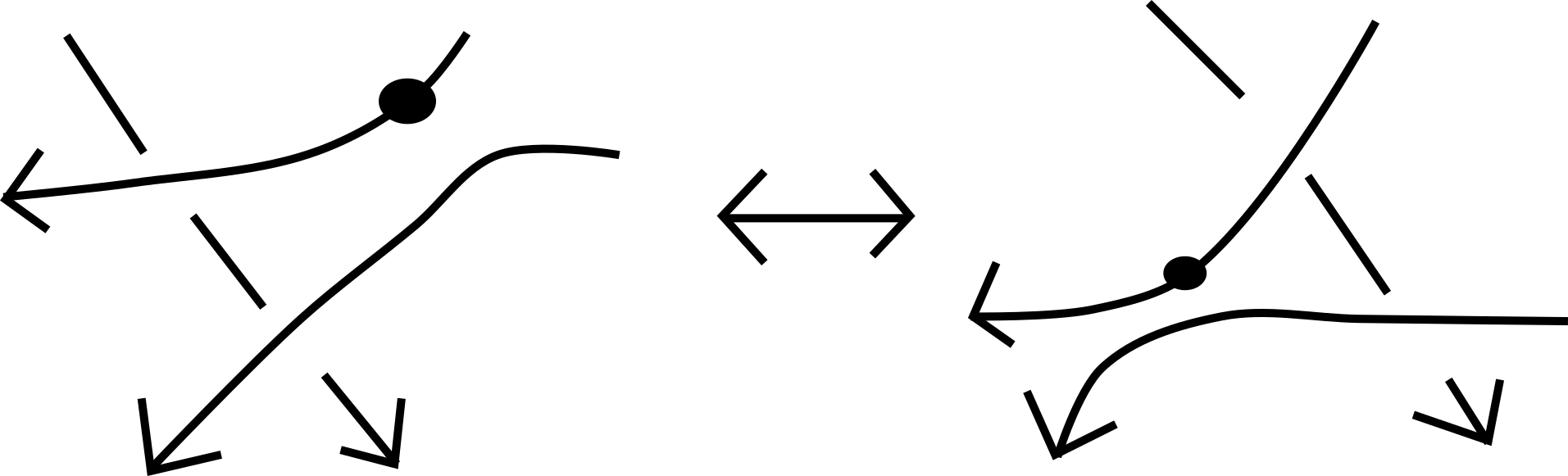}
  \caption{Smoothing crossing $A$.}
  \label{reidemeisterthreea}
\end{subfigure}
\begin{subfigure}{.5\textwidth}
  \centering
  \includegraphics[width=.8\linewidth]{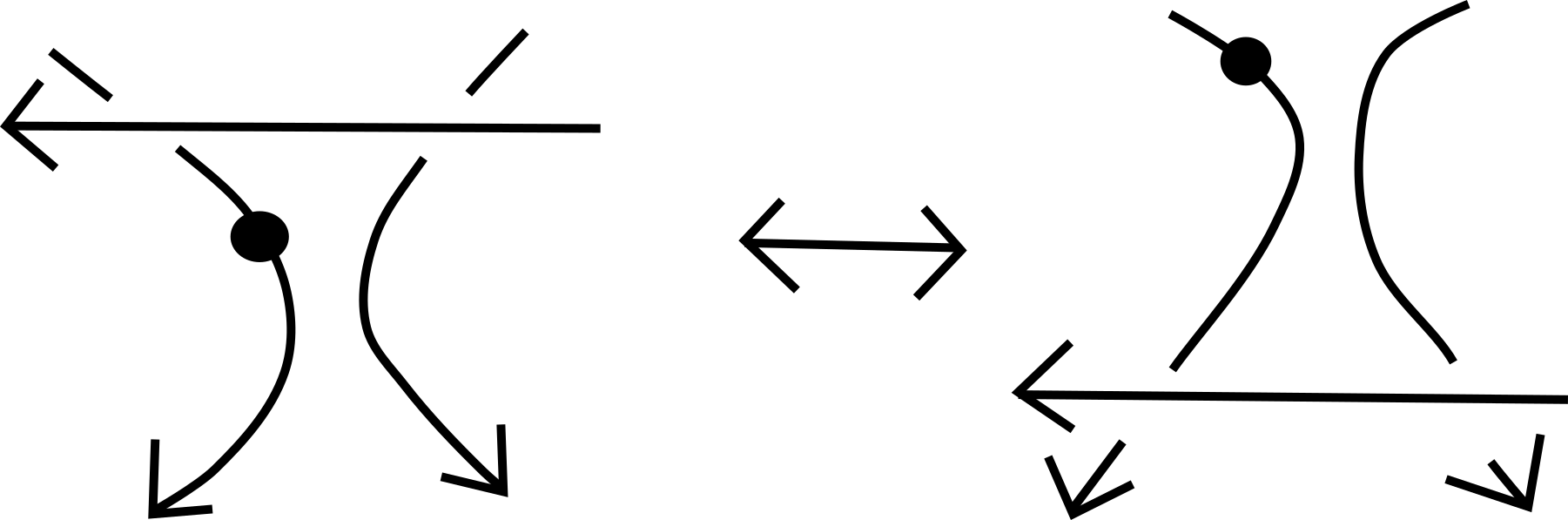}
  \caption{Smoothing crossing $B$.}
    \label{reidemeisterthreeb}
\end{subfigure}%
\caption{Smoothing crossings in Fig. \ref{reidemeisterthree}.}
\end{figure}

\begin{figure}[!h]
\centering
  \includegraphics[scale=.12]{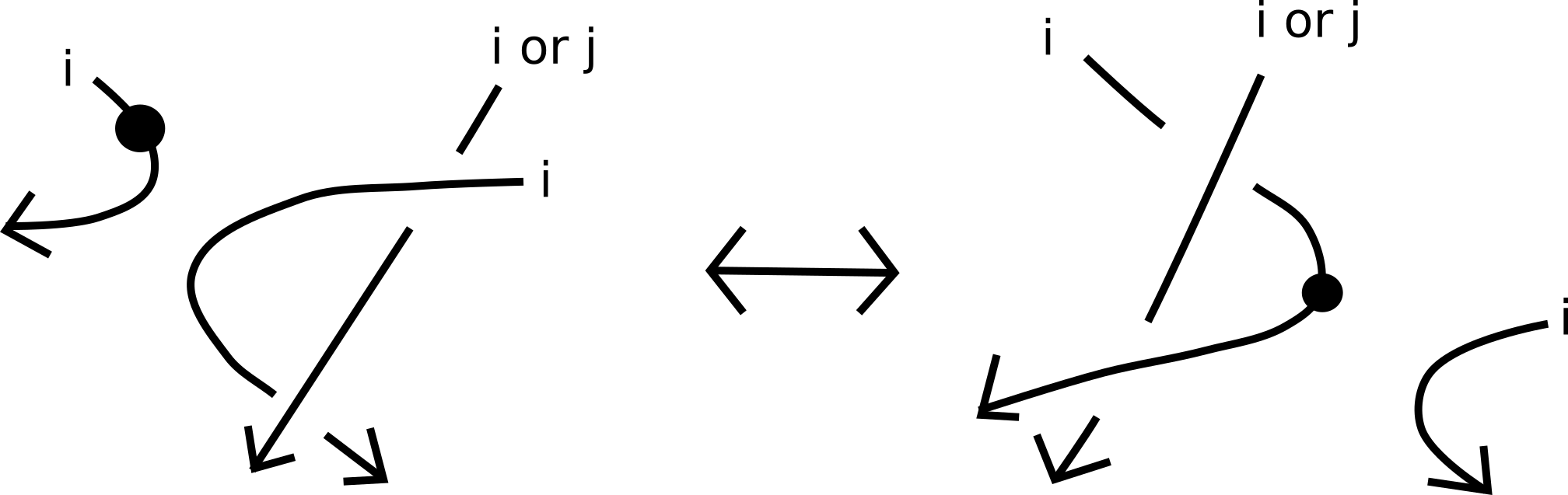}
  \caption{Smoothing crossing $C$ in Fig. \ref{reidemeisterthree}.}
    \label{reidemeisterthreec}
\end{figure}

The case of crossing C, pictured in Fig. \ref{reidemeisterthreec}, deserves a few more words: because $c$ is a self-crossing, we assume that the two strands that cross there belong to the same component, say $i$. If the third strand belongs to a different component $j$, we can ignore it in computing the invariant, and the resulting links are clearly isotopic, so the wriggle polynomial on either side of the move is the same. In the case that the third strand also belongs to component $i$, the other two crossings are self-crossings, so we must look at their contribution to the wriggle number, and that contribution will depend on whether after the smoothing the third strand is part of component one or component two. Let's suppose it belongs to component one: the picture on the left of Fig. \ref{reidemeisterthreec} then contains two positive crossings, one going over component two and one going under component two, so its wriggle number contribution is $(+1)-(+1)=0$, while the picture on the right contains two self-crossings of component one of the link, which are not counted in the wriggle number, so the two crossings also contribute $0$ to the wriggle number. All the other crossings outside the small region contribute the same to the wriggle number before and after the move (since the same component is chosen as component one), so both sides of Fig. \ref{reidemeisterthreec} have the same wriggle number.

Overall, we proved that the corresponding crossings on both sides of Fig. \ref{reidemeisterthree} have the same wriggle number, so the self-crossing wriggle polynomial is left unchanged by Reidemeister move three.

Finally, the case of the mixed Reidemeister move is pictured in Fig. \ref{mixedreidemeister}. For both versions of the move, the resulting links are isotopic and the same component is picked as component one, so the polynomial is invariant under this move as well. This completes the proof that the self-crossing wriggle polynomial is an invariant of virtual tangles.

\begin{figure}[!h]
  \centering
  \includegraphics[scale=.12]{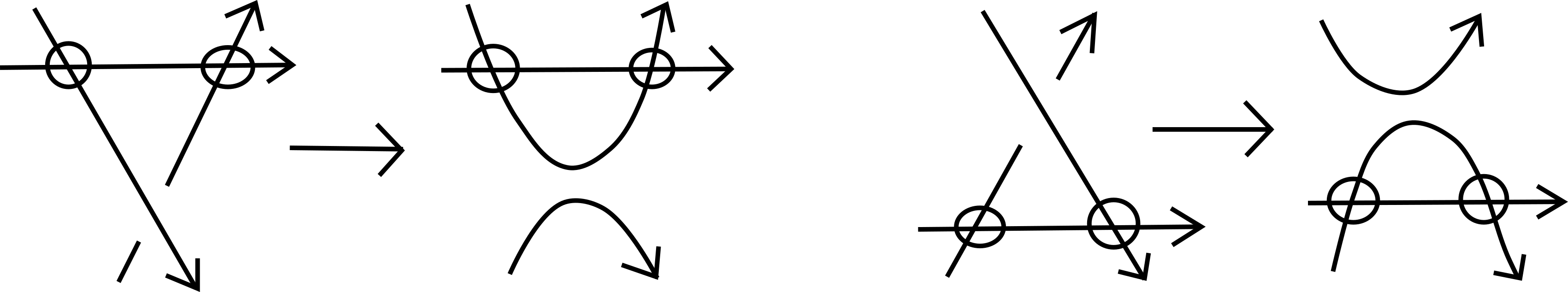}
  \caption{The mixed Reidemeister move case. The two links are clearly isotopic.}
  \label{mixedreidemeister}
\end{figure}

\end{proof}

The next step of the proof requires showing that the extension of the invariant to virtual tangles with double points always vanishes on tangles with two double points.

\begin{proof}
Let's consider a tangle $T$ with two double points $d, d'$, and resolve said double points according to the rule of section \ref{vassilievinvariants}. We will get the following expression (where, for notational convenience, we replaced the variables with the double points and their resolution):
\begin{equation}\label{resolutionequation}W_{sc, T}(d, d')=W_{sc, T}(++)-W_{sc,T}({+-})-W_{sc, T}({-+})+W_{sc, T}({--}).\end{equation}  A schematic version of the resulting expression is shown in Fig. \ref{proofresolution}, where the top crossing is $d$ and the bottom crossing is $d'$.

\begin{figure}[!h]
  \centering
  \includegraphics[scale=.12]{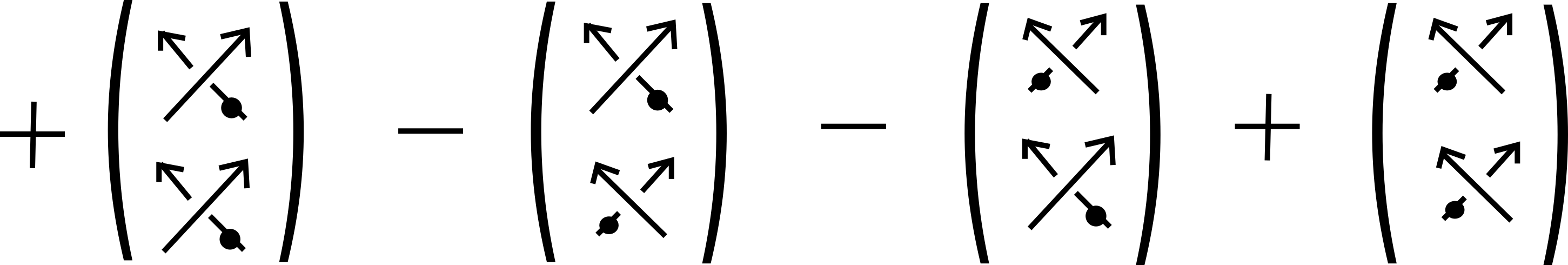}
  \caption{A schematic representation of equation \ref{resolutionequation} The top crossing represents $d$ and the bottom crossing $d'$.}
  \label{proofresolution}
\end{figure}

Note that a priori each of $d,d'$ could either be a self-crossing of a given component, or a crossing involving multiple components, and we will need to account for any possible combination.
Let's start by looking at the terms we get from $d$ and $d'$ themselves. Clearly, if either of the double points is at a crossing of two components, its resolutions are not self-crossings and as such do not contribute to the invariant. Now let's look at the case where $d$ is a self-crossing of component $i$ and $d'$ is a self-crossing of component $j$. If we smooth $d$ in the first two terms of the expansion we get isotopic links, and the same thing is true when we smooth the last two terms of the expansion; since these terms come with alternating signs, the total contribution of $d$ is zero, and an analogous argument shows that the total contribution of $d'$ is also zero. 

In the case where both $d$ and $d'$ are self-crossings of component $i$, we need to be a little more careful. Once again, let's focus on the first two terms of the expansion: it turns out that the contribution of $d'$ to the wriggle number is the same in both terms. Either $d'$ is a self-crossing of the link after the smoothing, and it doesn't contribute to the wriggle number; or both the sign of $d'$ AND which strand goes over change from one term to another, so it contributes the same amount ($\pm1$) in both terms. Every other crossing that is not $d, d'$ clearly contributes the same amount to both terms, so the wriggle number of the first two terms is the same; because of the alternating sign in front, the contribution of $d'$ in the first two terms cancels out, and for similar reasons the contribution of the last two terms also cancels out.
We can also use a similar argument with the third and fourth term of the expansion, as well as in the case of smoothing $d'$, pairing the first and third term and the second and fourth term. As a result, the net contribution of the crossings $d$ and $d'$ to the invariant is zero.

Now we need to look at how resolving $d$ and $d'$ affects the contribution of any \emph{other} crossing. Once again, the idea will be to pair up the terms.
\begin{itemize}
\item if $d, d'$ are not self-crossings, they will not contribute to any wriggle number, so each term in the expansion has the same invariant. Because of the alternating signs, the total contribution of every other crossing is zero.
\item if $d$ is a self-crossing of component $i$ and $d'$ is a self-crossing of component $j$, $d$ might affect the wriggle number from smoothing any self-crossing $c$ of component $i$, and similarly $d'$ might affect the wriggle number from smoothing self-crossings $c'$ of component $j$. Looking at $d$ and $c$ (since the $d', c'$ case is analogous) shows that the contribution of $d$ to $L_c$ is always the same (either zero if $d$ is a self-crossing of $L_c$, or always $\pm1$ depending which strand in $d$ belongs to component one of $L_c$). Since all terms have the same wriggle number, by the alternating signs the total contribution of $c$ is zero. This argument applies to every crossing that is not $d$ or $d'$, so the total contribution of every other crossing is zero.
\item Finally, in the case where $d$ and $d'$ both belong to component $i$, we need to make sure that the contribution of any self-crossing of component $i$ ultimately cancels out in the expansion. If in $L_c$ either $d$ or $d'$ is a self-crossing the argument of the previous case applies; otherwise a straightforward check of every possible choice of which strand in $d$ and $d'$ belongs to component one of $L_c$ shows that the wriggle number of $L_c$ is the same in every term, so once again the total contribution of every other crossing is zero because of the alternating sign.
\end{itemize}

In all of the above cases, the total contribution of any crossing is zero; this means $W_{sc, T}(d, d')=0$, completing this part of the proof.
\end{proof}

Finally, to show that the invariant doesn't vanish on a tangle with one double point we will consider a one-component tangle, i.e. a long virtual knot. It is easy to see that the extension of $W_{sc, T}$ to the tangle of Fig. \ref{pscnonzero} is $W_{sc, T}(t_1)=t_1^2-1$. This completes the proof of the theorem.

\begin{figure}[!h]
\centering
\includegraphics[scale=.15]{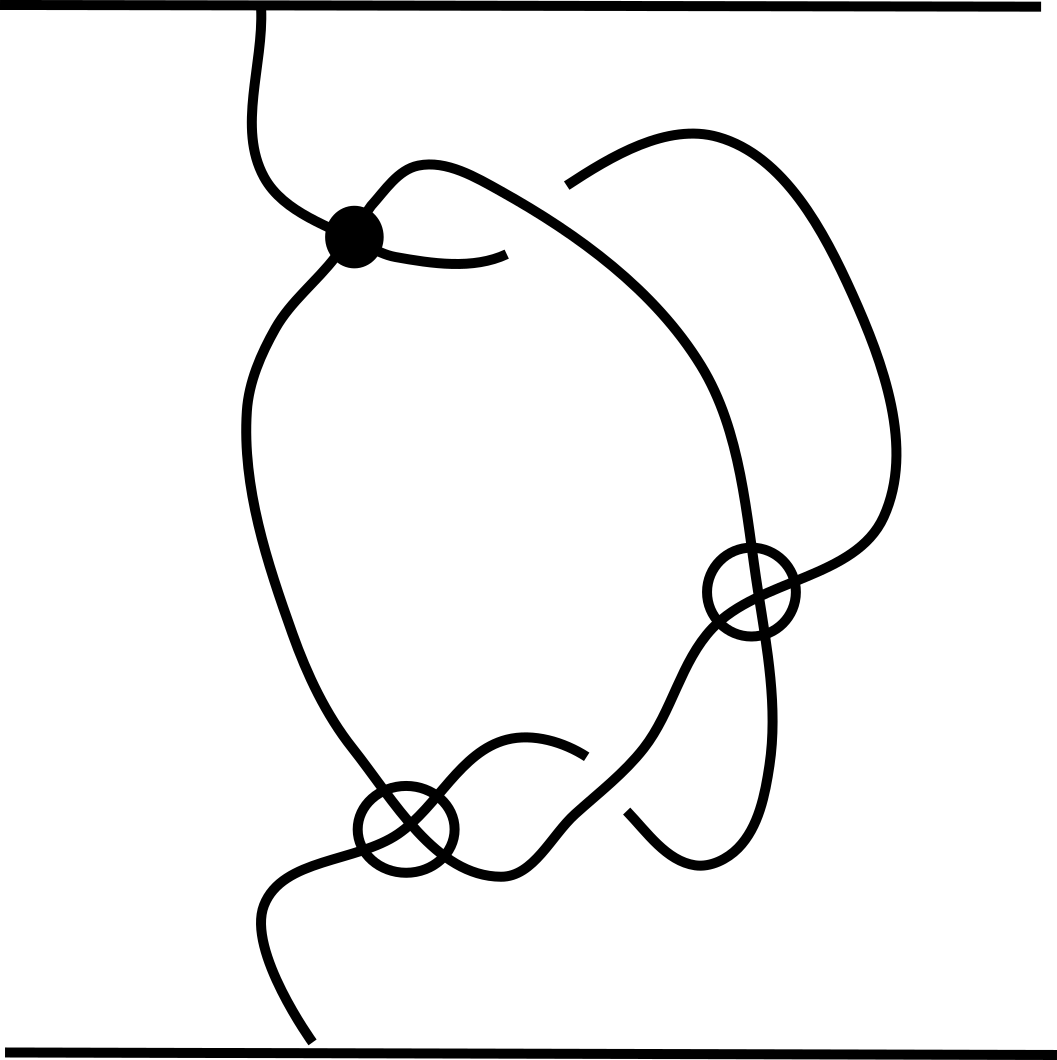}
\caption{A singular virtual tangle (in fact, a singular long virtual knot) for which $W_{sc, T}$ is nonzero.}
\label{pscnonzero}
\end{figure}

\begin{remark}
It is easy to see that if the tangle has only one component, this invariant reduces to the wriggle polynomial invariant of \cite{linkingnumberaffineindex}. 
As a consequence of Theorem \ref{wrigglepolyvirtualtanglesinvariant}, the wriggle polynomial for virtual knots is also an order one Vassiliev invariant of virtual knots. 
The proof that it vanishes on any knot with two double points is identical to the one for the tangle version, and the long knot in Fig. \ref{pscnonzero} is a long virtual knot with one double point whose wriggle polynomial is nonzero.
\end{remark}

\subsection{Properties of the invariant}
\label{strengthofinvariants}

In this section we will study a couple of properties of the self-crossing wriggle polynomial.

\begin{prop}
Let $T$ be a virtual tangle on $n$ components, and $T'$ a virtual tangle obtained from $T$ by reversing the orientation of component $T_i$. Then $$W_{sc, T'}(t_1,\ldots, t_i, \ldots, t_n)=W_{sc, T}(t_1, \ldots, t_i^{-1}, \ldots, t_n)$$
\end{prop}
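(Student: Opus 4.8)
The plan is to track, term by term in the defining sum, how reversing the orientation of $T_i$ affects each self-crossing contribution. First I would observe that the only terms of $W_{sc,T}$ that can possibly change are those coming from self-crossings of $T_i$ itself: for any other component $T_j$ with $j\neq i$, the two-component links $L_c$ attached to its self-crossings are computed while \emph{ignoring} $T_i$ entirely, so the orientation of $T_i$ is invisible to them and the contributions $sgn(c)(t_j^{W(L_c)}-1)$ are literally unchanged. It therefore suffices to analyze a single self-crossing $c$ of $T_i$ and show that its contribution transforms by the substitution $t_i\mapsto t_i^{-1}$.

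Next I would examine what happens locally at such a crossing $c$. Two facts must be checked. First, the sign $sgn(c)$ is preserved: since $c$ is a self-crossing of $T_i$, reversing the orientation of $T_i$ reverses \emph{both} strands meeting at $c$, and reversing both strands at a crossing leaves its sign unchanged. Second, the oriented smoothing of $c$ yields the same underlying two-component link as before, because the unoriented resolution is independent of the orientation; what does change is that (i) both resulting components acquire the reversed orientation, and (ii) the labels ``component one'' and ``component two'' get swapped. The swap (ii) is the crux of the argument: the black dot marks the \emph{incoming} understrand, and after reversal the incoming and outgoing understrands are interchanged; since these two arcs of the understrand lie on opposite components of the smoothing, the dot---and hence the label ``component one''---moves to the other component.

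With these two observations in hand, I would compute the effect on the wriggle number. Writing $L_c=(C_1,C_2)$ with $C_1$ the dotted component, we have $W(L_c)=vlk(C_1,C_2)-vlk(C_2,C_1)$, while after reversal the new ordered pair is $(\bar C_2,\bar C_1)$, the bars denoting reversed orientation. Every crossing between the two components now has \emph{both} of its strands reversed, so each linking number is unchanged, i.e. $vlk(\bar C_2,\bar C_1)=vlk(C_2,C_1)$ and $vlk(\bar C_1,\bar C_2)=vlk(C_1,C_2)$; combined with the label swap this gives $W(L_c^{new})=vlk(C_2,C_1)-vlk(C_1,C_2)=-W(L_c)$. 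This is nothing but the relation $W(C_2,C_1)=-W(C_1,C_2)$ recorded earlier, applied once we know $vlk$ is invariant under reversing both components.

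Finally I would assemble the pieces. Each self-crossing $c$ of $T_i$ contributes $sgn(c)(t_i^{W(L_c)}-1)$ to $W_{sc,T}$ and, by the above, $sgn(c)(t_i^{-W(L_c)}-1)$ to $W_{sc,T'}$, and the latter is exactly the former after the substitution $t_i\mapsto t_i^{-1}$. Since the terms for $j\neq i$ do not involve $t_i$, that substitution acts trivially on them, so summing over all crossings yields the claimed identity $W_{sc,T'}(t_1,\dots,t_i,\dots,t_n)=W_{sc,T}(t_1,\dots,t_i^{-1},\dots,t_n)$. The step I expect to demand the most care is the label swap in the second paragraph---verifying rigorously, e.g. via an explicit local coordinate picture at the crossing, that the dot lands on the opposite component after reversal---since the sign flip of the wriggle number, and hence the whole result, rests on it.
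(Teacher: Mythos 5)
Your proof is correct and follows essentially the same route as the paper's: reversing the component flips both strands at each self-crossing of $T_i$ (preserving $sgn(c)$ and the signs of all crossings of the smoothed link), while the black dot migrates to the other smoothed component, so $W(L_c)$ changes sign and each term transforms by $t_i\mapsto t_i^{-1}$. Your explicit verification that the dot lands on the opposite component --- the step the paper leaves implicit in the phrase ``the only difference comes from the choice of component one'' --- is sound, since the incoming and outgoing understrand arcs lie on different components of the oriented smoothing.
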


\begin{proof}
Consider the effect of swapping the orientation of a component on a self-crossing of said component, pictured in Fig. \ref{swappingorientation}. Changing the orientation of the component will change the direction of both strands of a self-crossing, thus preserving their sign. \begin{figure}[!h]
\centering
\includegraphics[scale=.15]{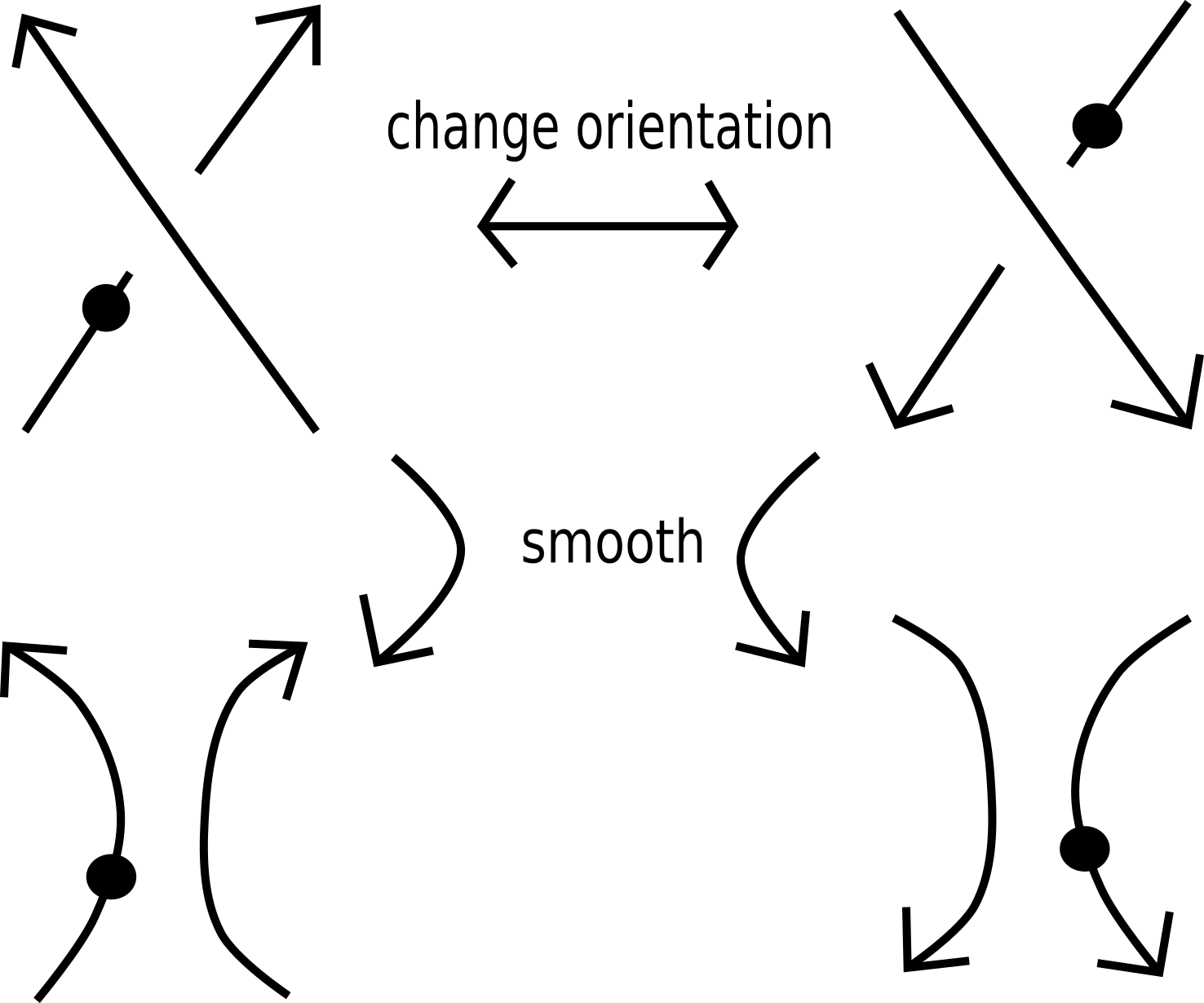}
\caption{The result of swapping the orientation of a component on the self-crossing wriggle polynomial.}
\label{swappingorientation}
\end{figure}
After smoothing a self-crossing, the resulting links are isotopic as unoriented links. 
Since both strands of the link swap orientation, the sign of every other crossing is also preserved; so the only difference in computing the two wriggle numbers comes from the choice of component one.
As mentioned in section \ref{virtualknotsandtangles}, exchanging components one and two will change the sign of the wriggle number. So $t_i^{W(L_c)}$ turns into $t_i^{-W(L_c)}$ for each self-crossing of $T_i$, which means we're replacing $t_i$ with $t_i^{-1}$.

\end{proof}

\begin{prop}
\label{additivity}
Let $T$ and $U$ be two oriented virtual tangles such that $T\#U$ is well-defined, neither of them containing a long component whose two ends belong to the same boundary (i.e. no ``cups'' or ``caps'' allowed). Assume that in the connected sum component $T_i$ glues to component $U_{\sigma(i)}$ for all $i$ for the appropriate permutation $\sigma$. Then $$W_{sc,T\#U}=W_{sc,T}+W_{sc,U}\in \Z[t_1, \ldots, t_n, u_1, \ldots, u_m]/R,$$
where $R$ is the set of relations $\{t_i=u_{\sigma(i)}\}$
\end{prop}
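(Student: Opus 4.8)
The plan is to exploit the fact that the connected sum is built by vertical stacking, so that every crossing of $T\# U$ lies entirely in the $T$-region or entirely in the $U$-region and no new crossings are created along the gluing edge. First I would record the bookkeeping of components: because neither tangle has cups or caps, each long component meets the gluing edge in at most one point, so the gluing realizes $\sigma$ as an honest matching---$T_i$ is joined to $U_{\sigma(i)}$ along a single point and to nothing else. Since $\sigma$ is a permutation (hence injective), two distinct components $T_i, T_k$ are never merged in $T\# U$; consequently a self-crossing of $T\# U$ is either a self-crossing of some $T_i$ or a self-crossing of some $U_j$, and these two families are disjoint. This already identifies the index set of the defining sum for $W_{sc,T\# U}$ with the disjoint union of the index sets for $W_{sc,T}$ and $W_{sc,U}$.

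The heart of the argument is to check that each self-crossing contributes the same monomial in $T\# U$ as it does in its original tangle. Fix a self-crossing $c$ of a component $T_i$. Its sign and its black dot are determined locally at $c$, which sits in the $T$-region, so both are unchanged by passing to $T\# U$. It remains to compare the wriggle number $W(L_c)$ computed in $T$ with the one computed in $T\# U$. Smoothing $c$ splits $T_i\# U_{\sigma(i)}$ into two arcs $\alpha$ (component one) and $\beta$; because $T_i\# U_{\sigma(i)}$ crosses the gluing edge in the single gluing point, and that point is not $c$, the entire $U_{\sigma(i)}$-part lies on exactly one of $\alpha, \beta$. That extra arc sits in the lower square and therefore has no crossings with the portion of $L_c$ lying in the upper square. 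Hence the inter-component crossings of $L_c$ in $T\# U$ are exactly those already present in $T$, with the same signs and the same over/under data, and the choice of component one is unaffected; so $W(L_c)$ agrees in both computations. The symmetric statement holds for self-crossings of the $U_j$.

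With these two facts in hand, the remaining step is purely formal: the defining sum for $W_{sc,T\# U}$ splits into the self-crossings coming from $T$ and those coming from $U$, each term reproducing the corresponding term of $W_{sc,T}$ or $W_{sc,U}$, where the single variable attached to the merged component $T_i\# U_{\sigma(i)}$ is read as $t_i$ on the $T$-terms and as $u_{\sigma(i)}$ on the $U$-terms. Imposing the relations $t_i = u_{\sigma(i)}$ makes these readings agree, yielding $W_{sc,T\# U}=W_{sc,T}+W_{sc,U}$ in $\Z[t_1,\dots,t_n,u_1,\dots,u_m]/R$.

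The main obstacle is the wriggle-number comparison in the second paragraph, and it is precisely here that the no-cups-no-caps hypothesis is indispensable: if a component were allowed to meet the gluing edge in two points, then after smoothing $c$ the attached $U$-part could join $\alpha$ to $\beta$, turning self-crossings of $U_{\sigma(i)}$ into inter-component crossings of $L_c$ and altering $W(L_c)$. I would therefore state explicitly where the hypothesis is used and verify the single-point-of-contact claim carefully, since everything else reduces to routine sign-and-variable bookkeeping.
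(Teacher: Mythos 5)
Your proposal is correct and follows essentially the same route as the paper's proof: you split the self-crossings of $T\#U$ into those of $T$ and those of $U$, observe that the gluing creates no new inter-component crossings in any smoothed link $L_c$ (since each tangle's crossings are confined to its own square), and identify the variables via the relations $R$. Your single-point-of-contact argument, showing the $U_{\sigma(i)}$-part lands entirely on one component of $L_c$, is simply a more explicit justification of the paper's one-line key observation, and it correctly pinpoints where the no-cups-no-caps hypothesis is used.
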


\begin{cor}\label{vsladditivity}
The self-crossing wriggle polynomial is additive and commutative under connected sum of virtual string links: if $T, U$ are virtual string links on $n$ strands, then $W_{sc, T\#U}=W_{sc, T}+W_{sc, U}=W_{sc, U\#T}$.
\end{cor}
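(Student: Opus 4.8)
The plan is to obtain this corollary as an immediate application of Proposition \ref{additivity}, with commutativity then coming for free from commutativity of addition in the polynomial ring. First I would verify that virtual string links meet the hypotheses of that proposition. By definition each of the $n$ strands of a string link runs from a distinguished point on the top boundary to one on the bottom boundary, so no component is a cup or a cap; moreover $T$ and $U$ each carry $n$ compatibly oriented endpoints on the boundaries being glued, so $T\#U$ is a well-defined virtual string link on $n$ strands. The stacking identifies each strand $T_i$ with a strand $U_{\sigma(i)}$ for the appropriate permutation $\sigma$ (for the usual pure-string-link convention $\sigma$ is the identity), so the relation set $R=\{t_i=u_{\sigma(i)}\}$ collapses the two families of variables into one, and after this identification Proposition \ref{additivity} yields
\begin{equation*}
W_{sc, T\#U}=W_{sc, T}+W_{sc, U}\in\Z[t_1,\ldots,t_n].
\end{equation*}

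For commutativity I would note that, in contrast to the general tangle situation, string links can be stacked in either order: $U\#T$ is again a well-defined virtual string link on $n$ strands, since $U$ and $T$ likewise present $n$ matching, compatibly oriented endpoints. Applying Proposition \ref{additivity} a second time to $U\#T$---where the gluing now matches $U_j$ to $T_{\sigma^{-1}(j)}$, giving the same identification of variables---produces $W_{sc, U\#T}=W_{sc, U}+W_{sc, T}$ in the same ring $\Z[t_1,\ldots,t_n]$. Since polynomial addition is commutative, $W_{sc, T}+W_{sc, U}=W_{sc, U}+W_{sc, T}$, and chaining the two equalities gives $W_{sc, T\#U}=W_{sc, T}+W_{sc, U}=W_{sc, U\#T}$, as desired.

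I do not expect any substantive obstacle here, since all the real work resides in Proposition \ref{additivity}; the corollary is essentially a bookkeeping consequence. The only points deserving care are definitional: confirming that string links contain no cups or caps so that the proposition applies, tracking how the gluing permutation identifies the variable families (so that both connected sums live in one common ring), and observing that both $T\#U$ and $U\#T$ are legal, so that the proposition may be invoked for each. The slightly surprising feature---that the geometric operation $\#$ is generally noncommutative yet the invariant is symmetric in $T$ and $U$---is then explained by the fact that commutativity is inherited from the target ring rather than from any symmetry of $\#$ itself.
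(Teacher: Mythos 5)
Your proposal is correct and follows essentially the same route as the paper: both reduce the corollary to Proposition \ref{additivity} by noting that string links have no cups or caps and that the gluing permutation is the identity. The only cosmetic difference is that the paper justifies commutativity by observing directly that the self-crossings of $(T\#U)_i$ coincide with those of $(U\#T)_i$, while you obtain the same conclusion by invoking the proposition a second time for $U\#T$ and using commutativity of addition in $\Z[t_1,\ldots,t_n]$ --- an equivalent bookkeeping step.
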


\begin{proof}
Suppose we're dealing with two tangles $T, U$ with no closed component; each tangle has its own self-crossing polynomial $p_{sc}(T)\in\Z[t_1, \ldots, t_n]$, $p_{sc}(U)\in\Z[u_1, \ldots, u_m]$ (we use different variables to keep track of the components more easily).
Assuming that the connected sum $T\#U$ is well-defined, each long component of $T$ must be connected to a long component of $U$; suppose $T_i$ is glued to $U_{\sigma(i)}$ for an appropriate permutation $\sigma$ to form the $i$-th component of $T\#U$. Let $R$ be the set of relations of the form $t_i=u_{\sigma(i)}$ that come from the above identification.
Note that any closed components of $T$ or $U$ do not glue to anything, so their contribution to the self-crossing wriggle polynomial is unchanged by the glueing.
Let us now consider the long components: on the one hand the $i$-th long component of $T\#U$ contributes to the self-crossing polynomial the terms
$$\sum_{c\in (T\#U)_i}sgn(c)(t_i^{W(L_C)}-1).$$

But the self-crossings of $(T\#U)_i$ are the union of the self-crossings of $T_i$ with those of $U_{\sigma(i)}$. We can then rewrite the above expression as

\begin{equation}\begin{split}\label{tiusigmai}\sum_{c\in (T\#U)_i}sgn(c)(t_i^{W(L_c)}-1)=\sum_{\substack{c\in (T\#U)_i\\c\in T_i}}&sgn(c)(t_i^{W(L_c)}-1) +\\&+\sum_{\substack{c\in (T\#U)_i \\ c\in U_{\sigma(i)}}}sgn(c)(t_i^{W(L_c)}-1).\end{split}\end{equation}
On the other hand, the sum of the contributions of the self-crossings of $T_i$ and the self-crossings of $U_{\sigma(i)}$ is
$$\sum_{c\in T_i}sgn(c)(t_i^{W(L_c)}-1)+\sum_{d\in U_{\sigma(i)}}sgn(d)(u_{\sigma(i)}^{W(L_d)}-1).$$
The key observation is that the glueing has no effect on the wriggle number. Because the knottedness of each tangle is contained in its own square, glueing $U_{\sigma)i)}$ does not add any new crossings to any of the links obtained by smoothing a self-crossing of $T_i$, and the same thing is true reversing the roles of $T_i$ and $U_{\sigma(i)}$. 
So up to identifying the dummy variables $t_i$ and $u_{\sigma(i)}$ (which we do by quotienting out by $R$), the terms in equation \ref{tiusigmai} are the same as the contribution of $(T\#U)_i$; if we took the sum over all components, we would thus get
$$W_{sc,T\#U}=W_{sc,T}+W_{sc,U}\in \Z[t_1, \ldots, t_n, u_1, \ldots, u_m]/R.$$

The reason we need to avoid ``cups'' and ``caps'' in our glueing is that they identify two distinct components of the tangle the get glued to. This potentially changes some mixed crossings into self-crossings, and can affect the value of the polynomial in unexpected ways.
Also note that $T\#U$ and $U\#T$ are not necessarily both defined, and even when they are there is no guarantee that the components glue in the same way. This means that, generally speaking, $p(T\#U)\neq p(U\#T)$.

\end{proof}

\begin{proof}[Proof of Corollary \ref{vsladditivity}]
In a virtual string link the endpoints don't permute; this means that $\sigma=id$ in Proposition \ref{additivity}, which shows that the invariant is simply additive. Moreover, the self-crossings of component $(T\#U)_i$ are the same as those of component $(U\#T)_i$, so we also get commutativity under connected sum.
\end{proof}

\section{Future work}
The invariant we defined in section \ref{wrigglepolynomial} has one major flaw: it does not detect the way that the components of $T$ are linked together, and it is really just the sum of the wriggle polynomials of each separate component. We are currently investigating ways of strengthening the invariant to detect the linking between components in a way that generalizes the self-crossing wriggle polynomial. We are at the same also pursuing a slightly different approach, derived from the following fact: in the virtual knot case, the Wriggle polynomial coincides with the Affine Index polynomial, as shown in \cite{linkingnumberaffineindex}. It is easy to see that this is also true for long virtual knots, since the point at infinity does not affect either the labeling or the smoothing of the crossings. We can then construct a ``self-crossing'' affine index polynomial in the obvious way, ensuring it coincides with the self-crossing wriggle polynomial, and take that as a starting point for our generalization.

We have completed preliminary work on defining an affine index polynomial for virtual tangles, in a way that extends the already existing generalization for compatible virtual links found in \cite{virtualknotcobordismaffineindex}, by using a bi-label for every arc of the tangle, separately keeping track of the self-crossings and the mixed crossings. We are currently studying some of the properties of the new invariant. We are currently hard at work in studying the properties of said invariant, other possible ways of generalizing it, and if it is possible to convert these results to the wriggle polynomial setting.

The author would like to acknowledge his institution, Oxford College of Emory University, for research support during the development of this paper.

\bibliographystyle{amsalpha}
\bibliography{fullbibliography}

\end{document}